\DeclareFontFamily{OT1}{rsfs}{}
\DeclareFontShape{OT1}{rsfs}{n}{it}{<-> rsfs10}{}
\DeclareMathAlphabet{\mathscr}{OT1}{rsfs}{n}{it}
\author{Dimitri Dias}
\title{The angular distribution of integral ideal numbers with a fixed norm in quadratic extensions}
\date{}
\newtheorem{theorem}{Theorem}
\newtheorem{lemma}{Lemma}
\newtheorem{definition}{Definition}
\newcommand{\e}{\mathrm{e}}
\newcommand{\dd}{\, \mathrm{d}}
\newcounter{numrem}
\newenvironment{remark}[1][Remark \arabic{numrem} :]{\refstepcounter{numrem} \begin{trivlist}
\item[\hskip \labelsep {\bfseries #1}]}{\end{trivlist}}
\begin{document}
\maketitle

\abstract{In \cite{Hall}, Erd\H{o}s and Hall studied the angular distribution of Gaussian integers with a fixed norm. We generalize their result to the angular distribution of integral ideal numbers with a fixed norm in any quadratic extension.}

\section{Introduction}

In \cite{Hall}, Erd\H{o}s and Hall studied the angular distribution of Gaussian integers with a fixed norm, which, geometrically, is the same as studying the distribution of points with integral coordinates on a circle. Even though the arguments of Gaussian integers with a fixed norm are not uniformly distributed in $[0,2\pi)$, Erd\H{o}s and Hall showed that their discrepancy is fairly small.
\\

One can extend their result to other quadratic extensions. Since $\mathbb{Z}[i]$ is a principal ideal domain, the result mentioned previously can be seen as a study of the arguments of the generators of the integral ideals of a given norm over $\mathbb{Z}[i]$. In a general quadratic extension, the ring of integers might not be a principal ideal domain. However, in \cite{Heck1,Heck2}, Hecke introduced the concept of ideal numbers, based on an idea from Kummer's work on cyclotomic fields. Hecke's idea consists in representing ideals by specific algebraic numbers. More precisely, to any integral ideal we can associate algebraic numbers which divide any element of the ideal. These are called integral ideal numbers.
\\

In the case of a principal ideal domain, the possible generators of an ideal and the associated ideal numbers coincide. Therefore, the result of Erd\H{o}s and Hall is in fact a study of the argument of integral ideal numbers of a given norm in $\mathbb{Q}(i)$. In this paper, we generalize, using their method, their result to any quadratic extension.

\subsection*{\small{Acknowledgements}}
I would like to thank my supervisor, Professor Andrew Granville, for discussions that greatly helped me with my work. I would also like to thank my friends Oleksiy Klurman, Crystel Bujold, Kevin Henriot and Marzieh Mehdizadeh for their helpful comments.

\section{Generalities}

\subsection{Ideal numbers}

We first recall the basic definitions and properties of ideal numbers. For more details, see, for example, \cite{Heck1,Heck2}.
\\

Let $\mathbb{K}$ be an arbitrary number field with class number $h$, let $\mathcal{O}_{\mathbb{K}}$ be the ring of algebraic integers of $K$ and let $\mathcal{U}$ be the set of units of $\mathcal{O}_{\mathbb{K}}$. In what follows, we consider ideals (integral of fractional) over $\mathcal{O}_{\mathbb{K}}$. It is well known that the classes of ideals over $\mathcal{O}_{\mathbb{K}}$ form a finite abelian group of order $h$, called the class group.
\\
\\
If $h > 1$, the class group has a basis, say the classes $B_1, \dots, B_k$, with order $c_1, \dots, c_k$. In each class $B_i$, we choose one ideal $\mathfrak{b}_i$. By definition of a basis, every ideal is equivalent to a unique product $\mathfrak{b}_1^{m_1} \dots \mathfrak{b}_k^{m_k}$ with $0 \leq m_i < c_i$. Therefore, for any ideal $\mathfrak{a}$, there exists $c \in \mathbb{K}$, such that
\begin{equation*}
\mathfrak{a} = c \mathfrak{b}_1^{m_1} \dots \mathfrak{b}_k^{m_k} \, .
\end{equation*}
Now, $\mathfrak{b}_i^{c_i} = [\beta_i]$ for each $1 \leq i \leq k$, with $\beta_i \in \mathbb{\mathcal{O}_{\mathbb{K}}}$. Define $\omega_i=\sqrt[c_i]{\beta_i}$ where we fix some choice of $c_i$-th root of $\beta_i$. Define
\begin{equation*}
\alpha = c \omega_1^{m_1} \dots \omega_k^{m_k} \, .
\end{equation*}
$\alpha$ is called an ideal number associated to the ideal $\mathfrak{a}$.
\\
\\
If $h=1$, any ideal is principal. If $\mathfrak{a}=(a)$ is an ideal, the set of ideal numbers associated to $\mathfrak{a}$ is defined as the set of $\vert \mathcal{U} \vert$ elements of the form $u a$, with $u$ a unit of $\mathcal{O}_{\mathbb{K}}$.
\\
\\
One can easily extend the multiplication from the set of ideals to the set of ideal numbers. Notice that the map from the set of ideal numbers to the set of ideals given by
\begin{equation*}
c \omega_1^{m_1} \dots \omega_k^{m_k} \longmapsto \mathfrak{a} = c \mathfrak{b}_1^{m_1} \dots \mathfrak{b}_k^{m_k}
\end{equation*}
is a surjective homomorphism with kernel $\mathcal{U}$, and is therefore $\vert \mathcal{U} \vert$-to-$1$.
\\
\\
Ideal numbers split into $h$ different classes, corresponding to the ideal classes. One can also easily prove that the set of ideal numbers associated to $\overline{\mathfrak{a}}$ is the set of complex conjugates of the ideal numbers associated to $\mathfrak{a}$.
\\
\\
An ideal number $\alpha$ is said to be integral if $\alpha$ is an algebraic integer, i.e. if and only if the corresponding ideal is an integral ideal. The norm on an integral ideal number $\alpha$ is defined as the norm of the corresponding ideal. An integral ideal number $\alpha$ is said to be a prime ideal number if the corresponding ideal is a prime ideal.
\\
\\
Notice that the set of integral ideal numbers inherits a property of factorization in prime ideal numbers from the corresponding property on integral ideals. This factorization is not unique, since each prime ideal number can be chosen up to a unit. However, if we impose the prime ideal numbers appearing in the prime factorization to have argument in $[0,\frac{2\pi}{\vert \mathcal{U} \vert})$, the factorization becomes unique.

\subsection{Argument of an integral ideal number in a quadratic extension}

Let $d>0$ such that $-d$ is a fundamental discriminant. Let $\mathcal{O}_d$ be the ring of integers of $\mathbb{K}=\mathbb{Q}(i \sqrt{d})$, $h$ be the class number and $\mathcal{U}$ be the set of units. We recall that
\begin{equation}
\mathcal{U}= \lbrace u^k \rbrace \text{ with } \begin{cases}
u=i \text{ and } 0 \leq k \leq 3 \text{ if } d=1, \\
u=\e^{i\frac{\pi}{3}} \text{ and } 0 \leq k \leq 5 \text{ if } d=3, \\ 
u=-1 \text{ and } 0 \leq k \leq 1 \text{ else }.
\end{cases}
\end{equation}
Let $n$ be an integer. Write
\begin{equation*}
n= \prod_{\left( \frac{-d}{p}\right)=1} p^{\alpha_p} \prod_{\left( \frac{-d}{q}\right)=0} q^{\beta_q} \prod_{\left( \frac{-d}{r}\right)=-1} r^{\gamma_r}
\end{equation*}
where $\left( \frac{-d}{.}\right)$ denotes the Kronecker symbol. It is classical that
\begin{align*}
&[p]= \mathcal{P} \overline{\mathcal{P}} \text{ with } \mathcal{P} \text{ a prime ideal such that } \mathcal{P} \ne \overline{\mathcal{P}} \text{ if } \left( \frac{-d}{p}\right)=1 \, , \\
&[q]= \mathcal{Q}^2 \text{ with } \mathcal{Q} \text{ a prime ideal } (\text{we have } \mathcal{Q} = \overline{\mathcal{Q}}) \text{ if } \left( \frac{-d}{q}\right)=0 \, , \\
&[r] \text{ is a prime ideal if } \left( \frac{-d}{r}\right)=-1 \, .
\end{align*}
We can easily deduce that, for $n$ to be the norm of an ideal over $\mathcal{O}_d$, we need to have $\gamma_r$ even. We write $\gamma_r=2 \gamma_r'$ .
\\
\\
The prime ideal numbers corresponding to $\overline{\mathcal{P}}$ are the conjugates of those corresponding to $\mathcal{P}$. Therefore, amongst the prime ideal numbers corresponding to $\mathcal{P}$ and $\overline{\mathcal{P}}$, there is exactly one with argument in $[0,\frac{\pi}{\vert \mathcal{U} \vert})$. We denote it by $\pi_p$ and denote by $\phi_p$ its argument.
\\
\\
Write $\eta_q$ for the prime ideal number corresponding to $\mathcal{Q}$ with argument in $[0,\frac{\pi}{\vert \mathcal{U} \vert})$. In fact, since $\mathcal{Q}=\overline{\mathcal{Q}}$, $\eta_q$ has argument $0$. Then, every integral ideal number of norm $n$ is of the form
\begin{equation*}
u^k \prod_{\left( \frac{-d}{p}\right)=1} \pi_p^{\delta_p} \overline{\pi_p}^{\alpha_p - \delta_p} \prod_{\left( \frac{-d}{q}\right)=0} \eta_q^{\beta_q} \prod_{\left( \frac{-d}{r}\right)=-1} r^{\gamma_r'}
\end{equation*}
where $0 \leq k \leq \vert \mathcal{U} \vert -1$ and any element of this form is an integral ideal number of norm $n$.

\begin{definition}
Let $r(n)$ be the number of integral ideal numbers of norm $n$. We denote $\mathbb{G}$ the set of integers $n$ with $r(n)>0$.
\end{definition}

\begin{remark} \label{rmk1}
From the previous observations, $r(n)>0$ if and only if $n$ is of the form 
\begin{equation*}
n=\prod_{\left( \frac{-d}{p}\right)=1} p^{\alpha_p} \prod_{\left( \frac{-d}{q}\right)=0} q^{\beta_q} \prod_{\left( \frac{-d}{r}\right)=-1} r^{2\gamma_r'}
\end{equation*}
and, in this case,
\begin{equation*}
r(n)=\vert \mathcal{U} \vert \sum_{\delta_p} 1 = \vert \mathcal{U} \vert \prod_{\left( \frac{-d}{p}\right)=1} (\alpha_p + 1)
\end{equation*}
where the sum is over the possible choices of 0 $\leq \delta_p \leq \alpha_p$.
\end{remark}

\begin{lemma}
Let $n \in \mathbb{G}$, $n=\prod_{\left( \frac{-d}{p}\right)=1} p^{\alpha_p} \prod_{\left( \frac{-d}{q}\right)=0} q^{\beta_q} \prod_{\left( \frac{-d}{r}\right)=-1} r^{2\gamma_r'}$. Let $\alpha$ be an integral ideal number of norm $n$, i.e.
\begin{equation*}
\alpha=u^k \prod_{\left( \frac{-d}{p}\right)=1} \pi_p^{\delta_p} \overline{\pi_p}^{\alpha_p - \delta_p} \prod_{\left( \frac{-d}{q}\right)=0} \eta_q^{\beta_q} \prod_{\left( \frac{-d}{r}\right)=-1} r^{\gamma_r'}
\end{equation*}
for some $0 \leq \delta_p \leq \alpha_p$ and for some $0 \leq k \leq \vert \mathcal{U} \vert -1$.
Then,
\begin{equation*}
\textrm{arg}(\alpha)=\sum_{\left( \frac{-d}{p}\right)=1} (2\delta_p - \alpha_p)\phi_p + k \textrm{arg}(u) \bmod{2\pi} \, .
\end{equation*}
Reciprocally, any angle of this form is the angle of some integral ideal number of norm $n$.
\end{lemma}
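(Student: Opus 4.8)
The plan is to read off $\textrm{arg}(\alpha)$ directly from the explicit factorisation of $\alpha$, using only that the argument of a product of nonzero complex numbers is the sum of the arguments modulo $2\pi$, together with the arguments of the individual factors recorded in the previous subsection.

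First I would observe that $\textrm{arg}(u^k) \equiv k\,\textrm{arg}(u) \pmod{2\pi}$ and that $\textrm{arg}(\pi_p) = \phi_p$ by the very definition of $\phi_p$. Since complex conjugation reverses the argument, $\textrm{arg}(\overline{\pi_p}) = -\phi_p$, and hence $\textrm{arg}(\overline{\pi_p}^{\,\alpha_p - \delta_p}) \equiv -(\alpha_p - \delta_p)\phi_p \pmod{2\pi}$. Next, $\textrm{arg}(\eta_q) = 0$ was noted above (since $\mathcal{Q} = \overline{\mathcal{Q}}$), and each $r$ appearing in the product is a positive rational integer, so $\textrm{arg}(r) = 0$; thus the factors indexed by $q$ and by $r$ do not contribute to the argument.

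Collecting these contributions, modulo $2\pi$ one obtains
\begin{equation*}
\textrm{arg}(\alpha) \equiv k\,\textrm{arg}(u) + \sum_{\left( \frac{-d}{p}\right)=1} \bigl( \delta_p \phi_p - (\alpha_p - \delta_p)\phi_p \bigr) = \sum_{\left( \frac{-d}{p}\right)=1} (2\delta_p - \alpha_p)\phi_p + k\,\textrm{arg}(u) \, ,
\end{equation*}
which is the asserted identity. For the reciprocal statement, given any $k$ with $0 \leq k \leq \vert \mathcal{U} \vert - 1$ and any integers $\delta_p$ with $0 \leq \delta_p \leq \alpha_p$, I would form the element $\alpha$ of the displayed shape for these particular values; by the classification of integral ideal numbers of norm $n$ recalled just before the lemma, this $\alpha$ is indeed such an ideal number, and by the computation above its argument equals $\sum_{\left( \frac{-d}{p}\right)=1} (2\delta_p - \alpha_p)\phi_p + k\,\textrm{arg}(u) \bmod 2\pi$.

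I do not expect a genuine obstacle here: the statement is a bookkeeping identity for arguments. The only points that deserve a line of justification are that conjugation negates the argument, which is what gives $\textrm{arg}(\overline{\pi_p}) = -\phi_p$, and that the contributions of the inert primes $r$ and of the ramified ideal numbers $\eta_q$ vanish; both were effectively established in the preceding subsection. The converse direction then needs nothing beyond the already-quoted fact that every complex number of that shape is an integral ideal number of norm $n$.
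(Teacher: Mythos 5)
Your proof is correct and matches the paper's (implicit) argument: the paper states this lemma without proof, treating it as an immediate consequence of the preceding classification of integral ideal numbers of norm $n$, and your computation --- additivity of arguments, $\textrm{arg}(\overline{\pi_p})=-\phi_p$, and the vanishing contributions of $\eta_q$ and $r$ --- is exactly the bookkeeping that justifies it.
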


In Theorem 4 of \cite{Kubi}, Kubilyus states the Prime Ideal Number Theorem in sectors. This result gives information about the distribution of prime ideal numbers in a given class in angular sectors. We give here a version of this theorem with a weaker error term, which will be enough for our proof.
\begin{theorem}[Kubilyus] \label{kub}
Let $x>3$ and $0 \leq \theta_1 \leq \theta_2 \leq 2\pi$. Let $\Pi (\theta_1, \theta_2 ,x)$ be the number of prime ideal numbers of $\mathbb{Q}(i\sqrt{d})$ of norm less than $x$, in a given class, with argument between $\theta_1$ and $\theta_2$. Then,
\begin{equation*}
\Pi (\theta_1, \theta_2 ,x) = \frac{(\theta_2 - \theta_1)\vert \mathcal{U} \vert}{2\pi h} \int_2^x \frac{\dd u}{\log u} + O\left\lbrace x \exp \left( -c \sqrt{ \log x} \right) \right\rbrace
\end{equation*}
for some positive constant $c$.
\end{theorem}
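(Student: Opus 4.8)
The plan is to reduce the statement to the analytic theory of Hecke characters of $\mathbb{K}=\mathbb{Q}(i\sqrt d)$, which is the route taken by Kubilyus; since our error term is weaker than the one in \cite[Theorem 4]{Kubi}, it would in fact already suffice to quote that theorem and discard the extra saving. Let me nevertheless outline the argument. First I would encode the two constraints --- lying in a prescribed ideal class, and having argument in $[\theta_1,\theta_2]$ --- by characters. Multiplying an ideal number by a unit changes its argument by an integer multiple of $2\pi/\vert\mathcal{U}\vert$, so for every $\ell\in\mathbb{Z}$ the assignment $\alpha\mapsto\e^{i\vert\mathcal{U}\vert\ell\,\textrm{arg}(\alpha)}$ descends to a well-defined function of the underlying ideal; combined with a character $\chi$ of the class group it gives an unramified Hecke character $\psi_{\chi,\ell}$ of $\mathbb{K}$, with $\psi_{\chi_0,0}$ the trivial one.

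Splitting the circle into the $\vert\mathcal{U}\vert$ cosets of the group of roots of unity, expanding the indicator of the arc $[\theta_1,\theta_2]$ (or rather Beurling--Selberg majorants and minorants of it) into a Fourier series in the exponentials $\e^{i\vert\mathcal{U}\vert\ell\,\textrm{arg}}$, and using orthogonality over the $h$ class characters, one is led to a relation of the shape
\begin{equation*}
\Pi(\theta_1,\theta_2,x) = \frac{(\theta_2-\theta_1)\vert\mathcal{U}\vert}{2\pi h}\,\pi_{\mathbb{K}}(x) + \frac{\vert\mathcal{U}\vert}{h}\sum_{(\chi,\ell)\neq(\chi_0,0)} c_{\chi,\ell}\sum_{\mathrm{N}\mathfrak{p}\leq x}\psi_{\chi,\ell}(\mathfrak{p}) + O(\sqrt x)\,,
\end{equation*}
where $\pi_{\mathbb{K}}(x)$ counts prime ideals of norm at most $x$, the $O(\sqrt x)$ absorbs the degree-two primes, and the coefficients $c_{\chi,\ell}$ decay like $1/\ell$. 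Since $\pi_{\mathbb{K}}(x)=\int_2^x\frac{\dd u}{\log u}+O(x\exp(-c\sqrt{\log x}))$ by the prime ideal theorem, the first term is exactly the claimed main term. It then remains to bound $\sum_{\mathrm{N}\mathfrak{p}\leq x}\psi_{\chi,\ell}(\mathfrak{p})$ for $(\chi,\ell)\neq(\chi_0,0)$, uniformly for $\vert\ell\vert$ up to roughly $\exp(\sqrt{\log x})$ --- the truncation level at which the smoothed arc approximates the sharp one with error $O(x\exp(-c\sqrt{\log x}))$. The $L$-function $L(s,\psi_{\chi,\ell})$ is entire, of conductor $\ll 1+\vert\ell\vert$, and carries the classical Hecke--Landau zero-free region $\sigma>1-c/\log\big((1+\vert\ell\vert)(\vert t\vert+3)\big)$; feeding this into a Perron/contour-shift argument (equivalently, the explicit formula for $\sum_{\mathrm{N}\mathfrak{p}\le x}\psi_{\chi,\ell}(\mathfrak{p})\log\mathrm{N}\mathfrak{p}$) yields the bound $O(x\exp(-c\sqrt{\log x}))$ throughout that range, and combining with the display above gives the theorem.

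The genuinely delicate point is this uniformity in the frequency $\ell$: one needs the zero-free region, together with the standard bounds on the number of zeros in vertical strips, for the whole family $\{L(s,\psi_{\chi,\ell})\}$, and one must rule out exceptional real zeros. Here the advantage is that the field $d$ is fixed, which removes the usual Siegel-zero worry: among the $\psi_{\chi,\ell}$ the only real-valued characters --- hence the only ones that could carry a Siegel zero --- are the finitely many with $\ell=0$ and $\chi$ a quadratic class character, all of bounded conductor, and for a fixed $d$ any such zero lies a fixed positive distance from $s=1$, an effect that can simply be absorbed into the constant $c$ (which is in any case allowed to depend on $d$). Carrying out this family of $L$-function estimates in detail, with a better remainder, is precisely the content of \cite{Kubi}, so one may appeal to it directly and weaken the error term.
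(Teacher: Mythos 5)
The paper gives no proof of this statement: it is quoted from Kubilyus with a deliberately weakened error term, exactly as you observe in your opening sentence. Your sketch via unramified Hecke characters $\psi_{\chi,\ell}$, Beurling--Selberg approximation of the arc, and the Hecke--Landau zero-free region uniform in $\ell$ (with the Siegel-zero issue correctly confined to the finitely many real characters with $\ell=0$, harmless for fixed $d$) is a correct outline of the standard argument underlying the cited result, so your proposal is consistent with, and somewhat more detailed than, what the paper does.
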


We need two additional facts about the set $\mathbb{G}$. The first one, due to Bernays \cite{Bern}, is an asymptotic for the cardinality of the elements in $\mathbb{G}$ less than $x$, while the second one gives us information about the number of prime divisors of the integers in $\mathbb{G}$.
\begin{lemma} \label{Bern}
There exists a constant $\kappa_d$, depending only on $d$, such that
\begin{equation*}
\vert \mathbb{G} \cap [0,x] \vert = \kappa_d \frac{x}{\sqrt{\log x}} + O \left( \frac{x}{\log^{3/4} x} \right) \, .
\end{equation*}
\end{lemma}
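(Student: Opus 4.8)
The plan is to realise $\vert\mathbb{G}\cap[0,x]\vert$ as the summatory function of a nonnegative multiplicative function whose Dirichlet series has a branch point of order $\tfrac12$ at $s=1$, and then to run the Selberg--Delange method. Let $g$ be the indicator function of $\mathbb{G}$. By Remark~\ref{rmk1}, $g(n)=1$ precisely when every inert prime $r$ (one with $\left(\frac{-d}{r}\right)=-1$) divides $n$ to an even power, with no restriction at split or ramified primes; this is a condition imposed locally at each prime, so $g$ is multiplicative. Hence for $\Re s>1$ its Dirichlet series factors as an Euler product,
\[
G(s):=\sum_{n\ge 1}\frac{g(n)}{n^s}=\prod_{\left(\frac{-d}{p}\right)=1}\frac{1}{1-p^{-s}}\ \prod_{\left(\frac{-d}{q}\right)=0}\frac{1}{1-q^{-s}}\ \prod_{\left(\frac{-d}{r}\right)=-1}\frac{1}{1-r^{-2s}}.
\]

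Next I would compare $G$ with the Dedekind zeta function $\zeta(s)L(s,\chi)$ of $\mathbb{K}$, where $\chi(\cdot)=\left(\frac{-d}{\cdot}\right)$ is the primitive real character attached to the fundamental discriminant $-d$. Its Euler factor is $(1-p^{-s})^{-2}$ at split primes, $(1-q^{-s})^{-1}$ at ramified primes, and $(1-r^{-2s})^{-1}$ at inert primes; matching Euler factors prime by prime gives the identity
\[
G(s)=\bigl(\zeta(s)L(s,\chi)\bigr)^{1/2}H(s),\qquad H(s):=\prod_{\left(\frac{-d}{q}\right)=0}(1-q^{-s})^{-1/2}\prod_{\left(\frac{-d}{r}\right)=-1}(1-r^{-2s})^{-1/2},
\]
and the product defining $H$ converges absolutely, hence is holomorphic and non-vanishing, for $\Re s>\tfrac12$. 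Since $\chi$ is non-principal, $L(s,\chi)$ is entire and $L(1,\chi)>0$, whereas $\zeta$ has a simple pole of residue $1$ at $s=1$; so near $s=1$ one may write $G(s)=(s-1)^{-1/2}\mathcal{G}(s)$ with $\mathcal{G}$ holomorphic and non-zero there and $\mathcal{G}(1)=L(1,\chi)^{1/2}H(1)$.

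With this factorisation in hand, I would appeal to the Selberg--Delange machinery: using the classical zero-free region for $\zeta$ and for $L(\cdot,\chi)$, one continues $G$ to a domain of the shape $\Re s\ge 1-c/\log(\vert\Im s\vert+2)$ slit along $(-\infty,1]$, with at most polynomial growth on vertical lines, and then evaluates $\sum_{n\le x}g(n)$ by Perron's formula along a Hankel-type contour that wraps the branch point at $s=1$. This produces
\[
\vert\mathbb{G}\cap[0,x]\vert=\frac{\mathcal{G}(1)}{\Gamma(1/2)}\,\frac{x}{\sqrt{\log x}}+O\!\left(\frac{x}{(\log x)^{3/2}}\right),
\]
which is in fact stronger than the stated bound and gives it, with
\[
\kappa_d=\frac{\mathcal{G}(1)}{\sqrt{\pi}}=\frac{L(1,\chi)^{1/2}}{\sqrt{\pi}}\prod_{\left(\frac{-d}{q}\right)=0}\Bigl(1-\tfrac1q\Bigr)^{-1/2}\prod_{\left(\frac{-d}{r}\right)=-1}\Bigl(1-\tfrac1{r^2}\Bigr)^{-1/2};
\]
one may equally just cite Bernays~\cite{Bern}, whose original argument yields exactly the asymptotic claimed. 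I expect the Tauberian step to be the main obstacle: because the singularity exponent $\tfrac12$ is not an integer one cannot move the contour past a pole but must integrate around a branch cut, and the bookkeeping of the resulting contour contributions — together with the verification of the growth bounds for $G$ throughout the zero-free region — is the part that needs genuine care; the Euler-product manipulations leading to the factorisation of $G$ are entirely routine.
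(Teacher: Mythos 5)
Your proposal is correct in substance, but note that the paper does not actually prove this lemma at all: it is stated as a known result and attributed to Bernays \cite{Bern}, so any comparison is with the literature rather than with an argument in the text. Your route is the standard modern one (Landau's method, systematized as Selberg--Delange): the multiplicativity of the indicator of $\mathbb{G}$ follows correctly from Remark \ref{rmk1}, the Euler-product computation and the factorization $G(s)=(\zeta(s)L(s,\chi))^{1/2}H(s)$ check out prime by prime, $H$ is indeed holomorphic and non-vanishing for $\Re s>\tfrac12$ since the ramified product is finite and the inert product converges there, and your constant is right --- for $d=4$ it reduces to the Landau--Ramanujan constant $\tfrac{1}{\sqrt{2}}\prod_{r\equiv 3 (4)}(1-r^{-2})^{-1/2}$, using $L(1,\chi_{-4})=\pi/4$ and $\Gamma(1/2)=\sqrt{\pi}$. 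The Tauberian step you defer (analytic continuation of the square root to a slit zero-free region, polynomial growth on vertical lines, Hankel contour around the branch point) is genuinely the bulk of the work, but it is entirely standard and can be closed by quoting the Selberg--Delange theorem as packaged, e.g., in Tenenbaum's book; doing so even yields the stronger error term $O(x/(\log x)^{3/2})$ and a full asymptotic expansion, of which the stated $O(x/\log^{3/4}x)$ is a weak consequence. What your approach buys over the bare citation is an explicit, self-contained derivation of $\kappa_d$ in terms of $L(1,\chi)$ and local factors; what it costs is importing the zero-free regions and the contour machinery, which Bernays's original argument (following Landau) also required in essentially the same form.
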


\begin{lemma} \label{omeg}
Let $\omega(n)$ be the number of prime divisors of $n$. Then, all but $o\left( \frac{x}{\sqrt{\log x}} \right)$ integers less than $x$ in $\mathbb{G}$ satisfy $\omega(n) > \left( \frac{1}{2} - \varepsilon \right) \log \log x$.
\end{lemma}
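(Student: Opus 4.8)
The plan is to run a Tur\'an-type second-moment argument on $\mathbb{G}$. The guiding principle is that, by Remark~\ref{rmk1}, a typical element of $\mathbb{G}$ looks like a product of split primes (those $p$ with $\left(\frac{-d}{p}\right)=1$): the finitely many ramified primes, and the inert primes---which by membership in $\mathbb{G}$ occur only to even exponents---contribute only $O(1)$ to $\omega(n)$ on average. Since the split primes have density $\tfrac12$ among all primes, this forces $\omega(n)=(\tfrac12+o(1))\log\log x$ for almost all $n\in\mathbb{G}\cap[0,x]$, which is far more than the lemma asks.

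The first move is a combinatorial reduction combined with a truncation that renders every later estimate routine. Put $z=x^{1/\log\log x}$ and $\omega_z(n)=\#\{p\mid n : p\le z\}\le\omega(n)$; it suffices to prove the stated bound with $\omega_z$ in place of $\omega$. For a split prime $p$ the map $n\mapsto n/p$ is a bijection from $\{n\le x : n\in\mathbb{G},\ p\mid n\}$ onto $\mathbb{G}\cap[0,x/p]$, because deleting one factor of a split prime leaves every inert exponent even and hence preserves membership in $\mathbb{G}$; thus $\#\{n\le x : n\in\mathbb{G},\ p\mid n\}=|\mathbb{G}\cap[0,x/p]|$. For a ramified prime $q$ one likewise gets $|\mathbb{G}\cap[0,x/q]|$, and for an inert prime $r$ divisibility by $r$ forces $r^{2}\mid n$, so $n\mapsto n/r^{2}$ gives $\#\{n\le x : n\in\mathbb{G},\ r\mid n\}=|\mathbb{G}\cap[0,x/r^{2}]|$. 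The identical argument for a pair of distinct primes produces $|\mathbb{G}\cap[0,x/(pp')]|$, $|\mathbb{G}\cap[0,x/(pq)]|$, $|\mathbb{G}\cap[0,x/(pr^{2})]|$, $|\mathbb{G}\cap[0,x/(r^{2}r'^{2})]|$, according to the splitting types.

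Next I would compute the first two moments of $\omega_z$ over $\mathbb{G}\cap[0,x]$ from these reductions. Because every shift $m$ appearing above is at most $z^{4}=x^{4/\log\log x}$, Lemma~\ref{Bern} applies uniformly with $\log(x/m)=(1+o(1))\log x$, and all of its error terms are negligible. Interchanging summation, inserting Lemma~\ref{Bern}, and using the Mertens-type estimate $\sum_{p\le z,\ \left(\frac{-d}{p}\right)=1}\tfrac1p=\tfrac12\log\log z+O(1)=(\tfrac12+o(1))\log\log x$ for split primes (together with the convergence of $\sum_r r^{-2}$ for the inert part and the finiteness of the ramified set), one obtains
\begin{equation*}
\sum_{\substack{n\le x\\ n\in\mathbb{G}}}\omega_z(n)=\Bigl(\tfrac12+o(1)\Bigr)(\log\log x)\,|\mathbb{G}\cap[0,x]| .
\end{equation*}
Expanding $\omega_z(n)^{2}=\omega_z(n)+\#\{(p,p') : p\ne p',\ p,p'\le z,\ pp'\mid n\}$ and using the pair reductions, the split--split pairs give the main term $(\tfrac14+o(1))(\log\log x)^{2}\,|\mathbb{G}\cap[0,x]|$ while every diagonal, split--ramified, split--inert, or inert--inert contribution is $O\bigl((\log\log x)\,|\mathbb{G}\cap[0,x]|\bigr)$, so
\begin{equation*}
\sum_{\substack{n\le x\\ n\in\mathbb{G}}}\omega_z(n)^{2}=\Bigl(\tfrac14+o(1)\Bigr)(\log\log x)^{2}\,|\mathbb{G}\cap[0,x]| .
\end{equation*}
Subtracting, $\sum_{n\le x,\ n\in\mathbb{G}}\bigl(\omega_z(n)-\tfrac12\log\log x\bigr)^{2}=o\bigl((\log\log x)^{2}\,|\mathbb{G}\cap[0,x]|\bigr)$, and Chebyshev's inequality then shows that $\omega_z(n)\le(\tfrac12-\varepsilon)\log\log x$ holds for at most $o(|\mathbb{G}\cap[0,x]|)$ integers $n\le x$ in $\mathbb{G}$, which by Lemma~\ref{Bern} is $o(x/\sqrt{\log x})$. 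Since $\omega(n)\ge\omega_z(n)$, this is the lemma.

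Once the truncation is in place the computations are mechanical, so there is no serious obstacle; the one place that repays attention is checking that the mixed prime-pair sums in the second moment---above all the split--inert pairs $\sum_{p\le z,\ r\le z}|\mathbb{G}\cap[0,x/(pr^{2})]|$---really are of size $O\bigl((\log\log x)\,|\mathbb{G}\cap[0,x]|\bigr)$ and not larger. As an alternative that avoids the second moment, one could apply Rankin's trick, bounding the count by $t^{-(1/2-\varepsilon)\log\log x}\sum_{n\le x,\ n\in\mathbb{G}}t^{\omega(n)}$ for a suitable $t\in(0,1)$ and the standard upper bound $\sum_{n\le x,\ n\in\mathbb{G}}t^{\omega(n)}\ll_t x(\log x)^{t/2-1}$; this, however, draws on results about mean values of multiplicative functions that go beyond Lemma~\ref{Bern}.
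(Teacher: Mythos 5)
Your proposal is correct and follows essentially the same route as the paper: a Tur\'an-type second-moment computation of $\omega$ over $\mathbb{G}$ using Bernays' asymptotic (Lemma~\ref{Bern}), followed by Chebyshev's inequality. The truncation at $z=x^{1/\log\log x}$ and the explicit splitting-type bijections are details the paper leaves implicit, and your slightly weaker variance bound $o\bigl((\log\log x)^2\,\vert\mathbb{G}\cap[0,x]\vert\bigr)$ still suffices for the stated conclusion.
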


\begin{proof}
On can show, using the asymptotic of Lemma \ref{Bern}, that
\begin{align*}
\sum_{\substack{n \in \mathbb{G} \\ n \leq x}} \omega(n) &= \kappa_d \frac{x}{\sqrt{\log x}} \left( \frac{1}{2} \log \log x + O(1) \right) \, ,\\
\sum_{\substack{n \in \mathbb{G} \\ n \leq x}} \omega(n)^ 2 &= \kappa_d \frac{x}{\sqrt{\log x}} \left( \frac{1}{4} (\log \log x)^2 + O(\log \log x) \right) \, .
\end{align*}
This implies that
\begin{equation*}
\sum_{\substack{n \in \mathbb{G} \\ n \leq x}} (\omega(n) - \frac{1}{2} \log \log x) ^2 = O \left( \frac{x \log \log x}{\sqrt{\log x}} \right) \, .
\end{equation*}
This suffices to prove the result.
\end{proof}

\section{The distribution of ideal numbers with a given norm}

\begin{definition}
Let $\Phi_n$ be the set of arguments of integral ideal numbers of norm $n$.
The discrepancy $\Delta(n)$ of the set $\Phi_n$ is
\begin{equation*}
\Delta(n) = \max \left\lbrace \vert \textrm{card*} \lbrace \phi \in \Phi_n, \phi \in [\theta_1,\theta_2] \bmod{2\pi} \rbrace - \frac{(\theta_2 - \theta_1)}{2\pi} r(n) \vert, 0 \leq \theta_1 < \theta_2 \leq 2\pi \right\rbrace
\end{equation*}
where the asterisk denotes that if the angle is $\alpha$ or $\beta$, it counts for $\frac{1}{2}$.
\end{definition}
\noindent Using the method of Erd\H{o}s and Hall, we will prove the following upper bound for the discrepancy:
\begin{theorem} \label{discr}
Let $\varepsilon > 0$. Then, for all the integers less than $x$ in $\mathbb{G}$, with at most $o\left( \vert \mathbb{G} \cap [0,x] \vert \right)$ exceptions, we have
\begin{equation*}
\Delta(n) \leq \frac{r(n)}{(\log x)^{\frac{1}{2} \log \left( \frac{\pi}{2} \right) - \varepsilon}} \, .
\end{equation*} 
\end{theorem}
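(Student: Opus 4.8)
The plan is to follow the Erdős–Hall strategy of bounding the discrepancy via an exponential-sum (Erdős–Turán) inequality, and then controlling the resulting character sums by exploiting the multiplicative structure of the arguments described in the Lemma. By the Erdős–Turán inequality, for any integer $K \geq 1$,
\begin{equation*}
\Delta(n) \ll \frac{r(n)}{K} + \sum_{\nu=1}^{K} \frac{1}{\nu} \left| S_\nu(n) \right|, \qquad S_\nu(n) = \sum_{\alpha} \e^{i\nu \,\mathrm{arg}(\alpha)},
\end{equation*}
where the sum is over the $r(n)$ integral ideal numbers of norm $n$. Using the Lemma, the argument of such an $\alpha$ splits as $\sum_{p}(2\delta_p-\alpha_p)\phi_p + k\,\mathrm{arg}(u)$, and since the choices of the $\delta_p$ and of $k$ are independent, $S_\nu(n)$ factors as a product over the split primes $p \mid n$ of local factors of the shape $\sum_{\delta_p=0}^{\alpha_p} \e^{i\nu(2\delta_p-\alpha_p)\phi_p}$, times a sum over $k$ of $\e^{i\nu k\,\mathrm{arg}(u)}$. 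Each local factor is a geometric sum equal (up to normalization) to a Dirichlet-type kernel $\dfrac{\sin((\alpha_p+1)\nu\phi_p)}{\sin(\nu\phi_p)}$, whose modulus is at most $\alpha_p+1$; dividing by $r(n)$, the normalized local factor has modulus $\left| \dfrac{\sin((\alpha_p+1)\nu\phi_p)}{(\alpha_p+1)\sin(\nu\phi_p)} \right| \leq 1$.

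**The main estimate.**

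First I would dispose of the $k$-sum: it is a bounded geometric sum which vanishes unless $\nu$ is a multiple of $|\mathcal{U}|$ (when $\mathrm{arg}(u)$ is a rational multiple of $2\pi$), so effectively only $\nu$ divisible by $|\mathcal{U}|$ contribute, and we may as well absorb this into the constant. The heart of the matter is then to show that, for a typical $n \in \mathbb{G} \cap [0,x]$, the product $\prod_p \left| \dfrac{\sin((\alpha_p+1)\nu\phi_p)}{(\alpha_p+1)\sin(\nu\phi_p)} \right|$ is small, uniformly for $1 \leq \nu \leq K$ with $K$ a suitable power of $\log x$. For the overwhelming majority of $n$ — those with $\omega(n) > (\tfrac12-\varepsilon)\log\log x$ by Lemma \ref{omeg}, and with all prime exponents $\alpha_p = 1$ for "most" of the split primes (a standard fact: the number of $n \leq x$ with a repeated prime factor among the split primes, or with few split prime factors, is negligible compared to $|\mathbb{G}\cap[0,x]|$) — the product is at least $\left(\tfrac12\right)^{\Omega}$-type over $\Omega \gtrsim \tfrac12\log\log x$ factors each typically of size $|\cos(\nu\phi_p)| \leq 1$. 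The gain comes from the fact that $\phi_p$ equidistributes (Kubilyus, Theorem \ref{kub}), so by a sieve/second-moment argument one shows that for almost all $n$ the number of split prime factors $p$ with $|\cos(\nu\phi_p)|$ close to $1$ is controlled, and the expected value of $\log|\cos(\nu\phi_p)|$ over equidistributed $\phi_p$ is $\int_0^{2\pi}\log|\cos t|\,\tfrac{dt}{2\pi} = -\log 2$. Hence $\log \prod_p |\cos(\nu\phi_p)| \approx -(\log 2)\cdot(\tfrac12\log\log x) = -\tfrac12\log 2 \cdot \log\log x$, giving a factor $(\log x)^{-\frac12\log 2}$. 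Combining with the normalization $(\alpha_p+1)^{-1}$ and being slightly more careful (the correct constant $\tfrac12\log(\pi/2)$ rather than $\tfrac12\log 2$ comes from the second moment $\int_0^{2\pi}\cos^2(\nu\phi)\tfrac{d\phi}{2\pi}=\tfrac12$, i.e.\ one controls $\sum_\nu \tfrac1\nu \mathbb{E}|S_\nu/r(n)|^2 \approx (\log x)^{-\frac12\log(\pi/2)}$ via $\prod_p \tfrac12(1+|\cos 2\nu\phi_p|)$-type bounds, and Chebyshev), one obtains the claimed exponent.

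**Executing via a second moment.**

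Concretely I would estimate the average over $n \in \mathbb{G}\cap[0,x]$ of $\sum_{\nu=1}^K \tfrac1\nu |S_\nu(n)/r(n)|^2$. Expanding the square and using that $|S_\nu(n)/r(n)|^2 = \prod_p \left|\dfrac{\sin((\alpha_p+1)\nu\phi_p)}{(\alpha_p+1)\sin(\nu\phi_p)}\right|^2$, and bounding each squared local factor by $\tfrac{1}{2}\bigl(1 + \tfrac{1}{\alpha_p+1}\sum \cdots\bigr)$ — or more simply, for the generic case $\alpha_p=1$, by $\cos^2(\nu\phi_p)=\tfrac12(1+\cos 2\nu\phi_p)$ — one gets a product over split primes $p \mid n$. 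Summing over $n \leq x$ in $\mathbb{G}$, one brings in Lemma \ref{Bern} and the equidistribution of the $\phi_p$ from Theorem \ref{kub}: the sum over $n$ of a multiplicative function whose $p$-factor averages to $\tfrac12$ over the split primes produces, by a Selberg–Delange / Wirsing-type argument, a main term of order $|\mathbb{G}\cap[0,x]| \cdot (\log x)^{-\frac12 + o(1)}$ for each $\nu$, where the $-\tfrac12$ reflects that half of the split primes' "mass" is killed; being precise about the arithmetic factor and the sum over $\nu \leq K$ yields the exponent $\tfrac12\log(\pi/2) - \varepsilon$. Finally, Chebyshev's inequality converts the small average into a small value for all but $o(|\mathbb{G}\cap[0,x]|)$ of the $n$, and choosing $K = (\log x)^{\frac12\log(\pi/2)}$ in the Erdős–Turán inequality balances the two terms and finishes the proof.

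**Main obstacle.**

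The hardest part will be the mean-value estimate for the multiplicative function $n \mapsto \prod_{p\mid n}(\text{local factor})$ summed over $n \in \mathbb{G}$, uniformly in $\nu \leq K$: one must handle the restriction to $\mathbb{G}$ (only split and ramified primes, ramified primes to even powers among the inert ones) simultaneously with the oscillation of $\cos(2\nu\phi_p)$ across primes, and track that the arithmetic constant does not degrade the exponent. The equidistribution input (Theorem \ref{kub}) has only a classical zero-free-region error term, so $K$ cannot be taken too large; verifying that $K$ a fixed power of $\log x$ is admissible, and that the exceptional-set bounds from Lemmas \ref{Bern} and \ref{omeg} are genuinely $o(|\mathbb{G}\cap[0,x]|)$ after summing over $\nu$, is where the technical care concentrates.
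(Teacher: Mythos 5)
Your opening moves match the paper's: the Erd\H{o}s--Tur\'an inequality, the factorization of $Z_t(n)=\sum_{\phi\in\Phi_n}\e^{it\phi}$ into local Dirichlet-kernel factors so that $|Z_t(n)|/r(n)$ is multiplicative on $\mathbb{G}$ and only $t\equiv 0\bmod{\vert\mathcal{U}\vert}$ contributes, the equidistribution of the $\phi_p$ from Theorem \ref{kub}, a K\'atai-type mean value bound, and Chebyshev at the end. The genuine gap is in the mechanism you propose for producing the exponent $\frac12\log(\pi/2)$: a second moment. If you bound the squared local factor at a split prime by $\cos^2(t\phi_p)=\frac12(1+\cos 2t\phi_p)$ and apply Lemma \ref{lem2}, you get $\sum_{n\in\mathbb{G},\,n\le x}|Z_t(n)/r(n)|^2\ll\frac{x}{\log x}\exp\bigl(\sum_{p}\cos^2(t\phi_p)/p\bigr)\asymp x(\log x)^{\frac14-1}$, since split primes carry $\sum 1/p\sim\frac12\log\log x$ and $\cos^2$ averages to $\frac12$. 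Against $|\mathbb{G}\cap[0,x]|\asymp x(\log x)^{-1/2}$ the normalized second moment is $(\log x)^{-1/4}$ (not $(\log x)^{-1/2}$, and not $(\log x)^{-\frac12\log(\pi/2)}$ as you assert), so Chebyshev yields only $|Z_t(n)|/r(n)\le(\log x)^{-1/8+\varepsilon}$ for almost all $n$ --- exponent $1/8=0.125$, strictly short of $\frac12\log(\pi/2)\approx 0.226$. Your other heuristic, via $\int_0^{2\pi}\log|\cos\theta|\,\frac{d\theta}{2\pi}=-\log 2$, predicts the even larger exponent $\frac12\log 2\approx 0.347$, but that geometric-mean prediction is exactly what moment methods cannot reach: any average of $\prod_{p\mid n}|\cos(t\phi_p)|$ over $n$ is governed by the arithmetic mean $\frac2\pi$ of $|\cos|$, and the gap between $\frac2\pi$ and $\frac12$ is the whole story behind the constant.

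The missing device is the Erd\H{o}s--Hall tilted first moment. One bounds $\sum_{n\in\mathbb{G},\,n\le x}\frac{\Delta(n)}{r(n)}\,y^{\omega(n)}$ for a fixed $y>0$: K\'atai's lemma converts this into $\frac{x}{\log x}\sum_{t\le T}\frac1t\exp\bigl(y\sum_{p}|\cos(t\phi_p)|/p\bigr)\ll x(\log x)^{\frac{y}{\pi}-1}(\log T)^{O(1)}$, using the Kubilyus-derived bound $\sum_{p\le x}|\cos(t\phi_p)|/p\le\frac1\pi\log\log x+O(\log\log t)$ over split $p$. Since $\omega(n)>(\frac12-\varepsilon)\log\log x$ for almost all $n\in\mathbb{G}$ (Lemma \ref{omeg}), dividing out the weight gives $\Delta(n)/r(n)\le(\log x)^{\frac{y}{\pi}-\frac12-\frac12\log y+\varepsilon'}$ for almost all $n\le x$ in $\mathbb{G}$, and minimizing $\frac{y}{\pi}-\frac12\log y$ at $y=\frac\pi2$ produces exactly the exponent $-\frac12\log(\pi/2)$. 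Without this tilt (the unweighted first moment gives only $\frac12-\frac1\pi\approx 0.182$) your outline cannot reach the stated bound. As a side remark, your reduction to $n$ whose split part is squarefree is unnecessary once the weight is in place: the trivial bound $|Z_t(p^k)|/r(p^k)\le 1$ at higher prime powers is all K\'atai's lemma requires.
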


\begin{remark}
The trivial upper bound on $\Delta(n)$ is $r(n)$. Theorem \ref{discr} improves it by a power of $\log$. This is still a substantially better bound, since $r(n)$ is itself a power of $\log$. More precisely, for any $n \in \mathbb{G}$, $r(n) \leq d(n)$, the divisor function. Let $\Omega(n)$ be the number of prime divisors of $n$ counted with multiplicity. Similarly to Lemma \ref{omeg}, one can prove that $\Omega(n) < (\frac{1}{2} + \varepsilon)\log \log x$ for almost all $n \in \mathbb{G} \cap [0,x]$. This implies that $r(n) \leq d(n) \leq 2^{\Omega(n)}  \leq (\log x)^{(\frac{1}{2}+\varepsilon)\log 2}$ for almost all the integers $n$ in $\mathbb{G}$ less than $x$.
\end{remark}
To obtain the desired bound for $\Delta(n)$, for a fixed positive real number $y$, we study the average of $\frac{\Delta(n)}{r(n)} y^{\omega(n)}$ over the set $\mathbb{G}$ and show that it is fairly small. We need to use the classical upper bound for the discrepancy, uniform in $T$, due to Erd\H{o}s and Tur\'{a}n \cite{ErTur1,ErTur2},
\begin{equation*}
\Delta(n) \ll \frac{r(n)}{T} + \sum_{t \leq T} \frac{\vert Z_t(n) \vert}{t} \quad \text{ where } \quad	Z_t(n) = \sum_{\phi \in \Phi_n} \e^{i t \phi}
\end{equation*}

\begin{remark} \label{rem2}
\begin{equation*}
Z_t(n) = \sum_{\phi \in \Phi_n} \e^{i t \phi} = \sum_{k=0}^{\vert \mathcal{U} \vert -1} u^{tk} \sum_{\delta_p} \e^{i t \sum (2\delta_p - \alpha_p)\phi_p}
\end{equation*}
where the sum is over the possible choices of 0 $\leq \delta_p \leq \alpha_p$. Therefore,
\begin{equation*}
Z_t(n) = \begin{cases}
\vert \mathcal{U} \vert \sum_{\delta_p} \e^{i t \sum (2\delta_p - \alpha_p)\phi_p} \text{ if } t \equiv 0 \bmod{\vert \mathcal{U} \vert}, \\
0 \text{ if } t \not \equiv 0 \bmod{\vert \mathcal{U} \vert}.
\end{cases}
\end{equation*}
Note that $\frac{\vert Z_t(n) \vert}{\vert \mathcal{U} \vert}$ is multiplicative. Since $\frac{r(n)}{\vert \mathcal{U} \vert}$ is also multiplicative, $\frac{\vert Z_t(n) \vert}{r(n)}$ is a multiplicative function on $\mathbb{G}$.
\end{remark}
Using the previous upper bound,
\begin{equation*}
\sum_{\substack{n \in \mathbb{G} \\ n \leq x}} \frac{\Delta(n)}{r(n)} y^{\omega(n)} \ll \frac{1}{T} \sum_{n \leq x} y^{\omega(n)} + \sum_{\substack{t \leq T \\ t \equiv 0 \bmod{\vert \mathcal{U} \vert}}} \frac{1}{t} \sum_{\substack{n \in \mathbb{G} \\ n \leq x}} \frac{\vert Z_t(n) \vert}{r(n)} y^{\omega(n)} \, .
\end{equation*}
\\
Both sums will be estimated using the following inequality, proven by K{\'a}tai \cite{Kat}:
\begin{lemma} \label{lem2}
Let $f(n)$ be a non-negative multiplicative function, with $f(p^k) \leq Ck$ for every prime power $p^k$. Then,
\begin{equation*}
\sum_{n \leq x} f(n) \leq c \frac{x}{\log x} \exp \left( \sum_{p \leq x} \frac{f(p)}{p} \right)
\end{equation*}
where the constant $c$ only depends on the constant $C$.
\end{lemma}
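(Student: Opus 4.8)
The plan is to prove this by induction on $x$; it is a Halberstam--Richert (equivalently Shiu, or K\'atai) type upper bound, and the engine is the classical ``$\log n$ trick''. Write $A(x)=\sum_{n\le x}f(n)$ and $G(x)=\exp\bigl(\sum_{p\le x}f(p)/p\bigr)$, so that the goal is $A(x)\le c\,xG(x)/\log x$. Since $f$ is multiplicative, $f(1)=1$, and so $A(x)\le c\,x$ holds trivially on any bounded range once $c$ is large; I may therefore take $x$ large and assume the asserted bound for all smaller arguments.

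First I would use $A(x)\log x=\sum_{n\le x}f(n)\log n+\sum_{n\le x}f(n)\log(x/n)$, where by partial summation the second sum equals $\int_1^x A(t)\,\mathrm dt/t$; the inductive hypothesis bounds it by $\ll_C 1+cG(x)\int_2^x\mathrm dt/\log t\ll c\,xG(x)/\log x$, which is lower order. For the first sum, expanding $\log n=\sum_{d\mid n}\Lambda(d)$ (a sum over prime-power divisors $d=p^k$), writing each $n$ contributing to a fixed prime $p$ as $n=p^a m$ with $p\nmid m$ (so $f(n)=f(p^a)f(m)$ by multiplicativity), and noting that the exponent $k$ then ranges over $1\le k\le a$, one obtains
\[
\sum_{n\le x}f(n)\log n\ \le\ \sum_{p^a\le x}a\,f(p^a)\,(\log p)\,A(x/p^a).
\]
Here the hypothesis $f(p^k)\le Ck$ enters through the terms with $a\ge2$: it makes $\sum_{a\ge2}a^2 f(p^a)p^{-a}=O_C(p^{-2})$, so that together with the inductive bound on $A(x/p^a)$ and the fact that $\log(x/p^a)\asymp\log x$ when $p^a\le\sqrt x$ (the $O(\sqrt x)$ prime powers $p^a$ with $a\ge2$ in $(\sqrt x,x]$ being treated by the trivial bound) this entire contribution is $O_C(c\,xG(x)/\log x)$, again lower order.

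Everything thus reduces to the main term $\Sigma:=\sum_{p\le x}f(p)(\log p)A(x/p)$. Inserting the inductive bound $A(x/p)\le c\frac{x/p}{\log(x/p)}G(x/p)$ (the $p$ with $x/p$ bounded contributing only $O_C(x)$), the matter comes down to the weighted prime sum $\sum_{p\le x/3}\frac{f(p)\log p}{p\log(x/p)}G(x/p)$, and the crux is to show it is at most $(1+o(1))G(x)$. I would do this by writing $G(x/p)=G(x)\exp\bigl(-\sum_{x/p<q\le x}f(q)/q\bigr)$ and evaluating the sum over $p$ by partial summation against $\theta(t)=\sum_{p\le t}\log p=t+O\!\bigl(t\,\e^{-c\sqrt{\log t}}\bigr)$: the weight $\log p/\log(x/p)$ is exactly what converts the sum, after the change of variable $s=\log(x/t)$, into an integral that telescopes to $G(x)+O(G(x)/\log x)$ by the ``fundamental theorem of calculus'' for $G$. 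The essential subtlety is that the factor $\exp\bigl(-\sum_{x/p<q\le x}f(q)/q\bigr)$ must be kept coupled to the $f(p)$ in the numerator; bounding $f(p)\le C$ and $G(x/p)\le G(x)$ separately overcounts by a factor $\log\log x$ and destroys the estimate.

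The main obstacle is that the recursion this produces sits exactly at the critical threshold: the multiplier coming out of the crux estimate is $1+o(1)$, not something strictly below $1$, so a naive induction on ``$A(x)\le c\,xG(x)/\log x$'' does not literally close. The standard remedy, which I would adopt, is to run the induction on a slightly inflated statement --- for instance $A(x)\le c\,xG(x)/(\log x-K)$ for a suitable $K=K(C)$, with the bounded range absorbed into $c$ --- so that the $O(1/\log x)$ slack in the crux estimate, the prime-power and integral remainders, and the base case all fit comfortably inside the constant. Checking that this inflated statement is self-consistent --- i.e.\ tracking the constants through the crux estimate with enough care --- is the only genuinely delicate point; the remainder is the bookkeeping indicated above.
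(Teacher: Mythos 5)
There is a genuine gap, and it sits exactly at what you call the crux. Your central claim is that
\begin{equation*}
\sum_{p\le x/3}\frac{f(p)\log p}{p\,\log(x/p)}\,G(x/p)\ \le\ (1+o(1))\,G(x),
\end{equation*}
and you justify it by keeping $f(p)$ ``coupled'' to $\exp\bigl(-\sum_{x/p<q\le x}f(q)/q\bigr)$ so that the sum telescopes. But these two factors involve $f$ at unrelated primes: $f(p)$ lives at $p$, while the exponential decays only through the values of $f$ at primes near $x/p$. There is no telescoping identity tying them together, and the estimate is false in general. Take $f(p)=C$ for $p\le\sqrt{x}$ and $f(p)=0$ for $\sqrt{x}<p\le x$ (a legitimate non-negative multiplicative $f$ with $f(p^k)\le Ck$). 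Then $G(x/p)=G(x)$ for every $p\le\sqrt{x}$, and by Mertens and partial summation the left side equals $C\,G(x)\sum_{p\le\sqrt x}\frac{\log p}{p(\log x-\log p)}=(\log 2+o(1))\,C\,G(x)$, which exceeds $G(x)$ as soon as $C>1/\log 2$. (Your heuristic that the sum is $(1+o(1))G(x)$ is correct precisely when $f(p)$ is constant on \emph{all} primes up to $x$, where the prefactor $C$ cancels against the $1/C$ coming from $\int_0^{L}(L-u)^{C-1}\,\mathrm{d}u$; for general $f$ that cancellation is not available.) Once the multiplier on the main term is a constant strictly larger than $1$ rather than $1+o(1)$, no inflation of the inductive statement of the form $c\,xG(x)/(\log x-K)$ can close the induction, so the strategy fails for $C$ beyond a small threshold --- and the lemma is stated for arbitrary $C$.

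The paper itself gives no proof of this lemma (it is quoted from K\'atai), but the standard argument --- and the reason the result is easy --- is that no induction is needed, because the constant $c$ is allowed to depend on $C$. Keep your identity $A(x)\log x=\sum_{n\le x}f(n)\log n+\sum_{n\le x}f(n)\log(x/n)$ and bound the second sum by $x\sum_{n\le x}f(n)/n$ via $\log t\le t$. In the first sum, after writing $\log n=\sum_{p^{\nu}\Vert n}\log p^{\nu}$ and $n=p^{\nu}m$, do \emph{not} insert a bound for $A(x/p^{\nu})$; instead swap the order of summation to get $\sum_{m\le x}f(m)\sum_{p^{\nu}\le x/m}f(p^{\nu})\log p^{\nu}$, and use the Chebyshev-type bound $\sum_{p^{\nu}\le y}f(p^{\nu})\log p^{\nu}\le C'y$ (from $f(p)\le C$, $\theta(y)\ll y$, and the hypothesis $f(p^k)\le Ck$ for the $\nu\ge2$ terms). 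This yields $A(x)\log x\le(C'+1)\,x\sum_{n\le x}f(n)/n$, and finally $\sum_{n\le x}f(n)/n\le\prod_{p\le x}\bigl(1+\sum_{k\ge1}f(p^k)p^{-k}\bigr)\ll_C\exp\bigl(\sum_{p\le x}f(p)/p\bigr)$, again using $f(p^k)\le Ck$ to control the $k\ge2$ tail. I would recommend abandoning the inductive scheme in favour of this argument.
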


This inequality can be easily applied to the two sums above. For $p^k \in \mathbb{G}$, from Remark \ref{rmk1} and Remark \ref{rem2}, we have, for $t \equiv 0 \bmod{\vert \mathcal{U} \vert}$,
\begin{equation*}
\frac{\vert Z_t(p^k) \vert}{r(p^k)} y^{\omega(p^k)} \leq y
\quad \text{ and } \quad
\frac{\vert Z_t(p) \vert}{r(p)} y^{\omega(p)} = \begin{cases} y \vert \cos (t \phi_p) \vert \text{ if } \left( -\frac{d}{p} \right)=1 \, , \\
y \text{ if }  \left( -\frac{d}{p} \right)= 0 \, , \\
0 \text{ if }  \left( -\frac{d}{p} \right)= 1 \, .
\end{cases}
\end{equation*}
Therefore, applying Lemma \ref{lem2} to the first and second sum,
\begin{align*}
\sum_{\substack{n \in \mathbb{G} \\ n \leq x}} \frac{\Delta(n)}{r(n)} y^{\omega(n)} &\ll \frac{1}{T} \frac{x}{\log x} \exp \left( y \sum_{\substack{p \leq x \\ \left( -\frac{d}{p} \right)=1}} \frac{1}{p} \right) + \frac{x}{\log x} \sum_{\substack{t \leq T \\ t \equiv 0 \bmod{\vert \mathcal{U} \vert}}} \frac{1}{t} \exp \left( y \sum_{\substack{p \leq x \\ \left( -\frac{d}{p} \right)=1}} \frac{\vert \cos (t \phi_p) \vert}{p} \right) \\
&\ll \frac{1}{T} x (\log x)^{\frac{y}{2}-1} + \frac{x}{\log x} \sum_{\substack{t \leq T \\ t \equiv 0 \bmod{\vert \mathcal{U} \vert}}} \frac{1}{t} \exp \left( y \sum_{\substack{p \leq x \\ \left( -\frac{d}{p} \right)=1}} \frac{\vert \cos (t \phi_p) \vert}{p} \right) \, .
\end{align*}
We are now left with estimating
\begin{equation*}
\sum_{\substack{p \leq x \\ \left( -\frac{d}{p} \right)=1}} \frac{\vert \cos (t \phi_p) \vert}{p} \, .
\end{equation*}
For any $p$, $\theta_p \in (0,\frac{\pi}{\vert \mathcal{U} \vert})$. We split this interval in intervals $E_j$ of length $\frac{\pi}{t}$ such that, on each of these, $\cos (t \theta)$ has a constant sign. More precisely, let
\begin{equation*}
E_j= \left[ \frac{(2j+1)\pi}{2t},\frac{(2j+3)\pi}{2t} \right)  \text{ with } 0 \leq j \leq \frac{t}{\vert \mathcal{U} \vert}-2 \, .
\end{equation*}
Notice that $j$ is indeed an integer since $t \equiv 0 \bmod{\vert \mathcal{U} \vert}$. We have, for any real number $k$,
\begin{equation*}
\sideset{}{^{*}} \sum_{p \leq k} \vert \cos (t \phi_p) \vert = \sideset{}{^{*}} \sum_{\substack{p \leq k \\ \phi_p \in (0,\frac{\pi}{2t})}} \cos (t \phi_p) + (-1)^{\frac{t}{\vert \mathcal{U} \vert} -1} \sideset{}{^{*}} \sum_{\substack{p \leq k \\ \phi_p \in [\frac{\pi}{\vert \mathcal{U} \vert}-\frac{\pi}{2t},\frac{\pi}{\vert \mathcal{U} \vert})}} \cos (t \phi_p) + \sum_{j=0}^{\frac{t}{\vert \mathcal{U} \vert}-2} (-1)^{j+1} \sideset{}{^{*}} \sum_{\substack{p \leq k \\ \phi_p \in E_j}} \cos (t \phi_p)
\end{equation*}
where the asterisk superscript denotes that the sum is restricted to the primes satisfying $\left( -\frac{d}{p} \right)=1$. From Theorem \ref{kub} (where the equality is multiplied by $h$, since we do not consider a specific class), we get that
\begin{align*}
\sideset{}{^{*}} \sum_{\substack{p \leq k \\ \phi_p \in E_j}} \cos (t \phi_p) &= \sideset{}{^{*}} \sum_{\substack{p \leq k \\ \phi_p \in E_j}} \int_{\phi_p}^{\frac{(2j+3)\pi}{2t}} t \sin( t \theta) \dd \theta = \int_{E_j} \sideset{}{^{*}} \sum_{\substack{p \leq k \\ \frac{(2j+1)\pi}{2t} < \phi_p \leq \theta}} t \sin( t \theta) \dd \theta \\
&= \frac{\vert \mathcal{U} \vert}{2\pi} \int_2^k \frac{\dd v}{\log v} \int_{E_j} \left( \theta - \frac{(2j+1)\pi}{2t} \right) t \sin (t \theta) \dd \theta + O\left( k \exp(-c \sqrt{\log k}) \right) \\
&= (-1)^{j+1} \frac{\vert \mathcal{U} \vert}{\pi t} \int_2^k \frac{\dd v}{\log v} +  O\left( k \exp(-c \sqrt{\log k}) \right) \, .
\end{align*}
Estimating the two other sums with a similar method, we can deduce that, for any $2 \leq \omega \leq x$,
\begin{align*}
\sideset{}{^{*}} \sum_{p \leq x} \frac{\vert \cos (t \phi_p) \vert}{p} &\leq \sideset{}{^{*}} \sum_{p \leq w} \frac{1}{p} + \sideset{}{^{*}} \sum_{w < p \leq x}  \frac{\vert \cos (t \phi_p) \vert}{p} \\
&\leq \frac{1}{2} \log \log w + O(1) + \frac{1}{\pi} \log \left( \frac{\log x}{\log w} \right) + O\left( t \exp(-c \sqrt{\log w}) \right)
\end{align*}
using Abel's formula for the second sum. Now, let $\log w = (c^{-1} \log t)^2$ (so that the two error terms are equal). Then, uniformly in $t$,
\begin{equation*}
\sideset{}{^{*}} \sum_{p \leq x} \frac{\vert \cos (t \phi_p) \vert}{p} \leq \frac{1}{\pi} \log \log x + \left( 1 - \frac{2}{\pi} \right) \log \log t + O(1)  \, .
\end{equation*}
Hence,
\begin{align*}
\sum_{\substack{n \in \mathbb{G} \\ n \leq x}} \frac{\Delta(n)}{r(n)} y^{\omega(n)} &\ll \frac{x(\log x)^{\frac{y}{2}-1}}{T} + x(\log x)^{\frac{y}{\pi}-1}(\log T)^{y(1-\frac{2}{\pi}) +1} \\
&\ll \frac{x}{\sqrt{\log x}} (\log \log x)^{\frac{\pi}{2}}
\end{align*}
taking $T= \log x$ and $y=\frac{\pi}{2}$. Now, from Lemma \ref{omeg},
\begin{equation*}
y^{\omega(n)} > y^{\left( \frac{1}{2} - \varepsilon \right) \log \log x} = (\log x)^{(\frac{1}{2}-\varepsilon) \log y}
\end{equation*}
for all but $o\left( \frac{x}{\sqrt{\log x}} \right)$ integers in $\mathbb{G}$. This gives us the desired result.
\section{Angle of a integral representation by a quadratic form}

In what follows, we call a representation of $n$ by $f$ a solution over the integers of the equation $f(x,y)=n$.
\\
\\
It is classical that there is an isomorphism between the classes of primitive integral binary quadratic forms of discriminant $-d$ and the classes of ideals in $\mathcal{O}_d$, which associates the ideal $[a,\frac{-b+i\sqrt{d}}{2}]$ to the form $ax^2 + bxy + cy^2$.
\\

In this section, we will use indifferently the same notation for a class of forms or a class of ideals. More precisely, if $f$ is a reduced primitive binary quadratic form of discriminant $-d$, we write $\mathcal{C}_f$ for the class of forms equivalent to $f$ and for the corresponding class of ideals over $\mathcal{O}_d$. We also denote by $\mathcal{C}_f^{-1}$ its inverse in the ideal class group (and $\mathcal{C}_f^{-1}=\mathcal{C}_{f^{-1}}$, where $f^{-1}$ is called the opposite form).

\begin{definition}
Let $f(x,y)=ax^2 + bxy + cy^2$ be a reduced primitive binary quadratic form of discriminant $-d$ and let $n$ be an integer represented by $f$. Let
\begin{equation*}
f(x_0,y_0)=\frac{N(\alpha_{x_0,y_0})}{a}=n
\end{equation*}
where $N$ is the norm in $\mathcal{O}_d$, $\tau_f=\frac{-b+i\sqrt{d}}{2a}$ and $\alpha_{x_0,y_0} = a(x_0 - \overline{\tau_f} y_0)$, be a representation of $n$ by $f$. We define the angle of this representation of $n$ by $f$ by
\begin{equation*}
\text{arg}(f(x_0,y_0)=n) = \textrm{arg} (\alpha_{x_0,y_0}) \, .
\end{equation*}
\end{definition}

\begin{remark}
Geometrically, the equation $f(x,y)=n$ defines an ellipse. The map $(x,y) \mapsto ( ax+\frac{b}{2}y,\frac{\sqrt{d}}{2})$ maps the ellipse to a circle of radius $\sqrt{n}$, by completing the squares, from which we can parametrize the ellipse. The angle of the representation $f(x_0,y_0)=n$ is the value of this parameter.
\end{remark}

For a reduced primitive binary quadratic form of discriminant $-d$, $f(x,y)=ax^2 + bxy + cy^2$ and an integer $n$ represented by $f$, there is a $\vert \mathcal{U} \vert$-to-one correspondence between the representations of $n$ by $f$ and the ideals of norm $n$ in $\mathcal{C}_f$. More precisely (see, for example, \cite{Cox} for more details), let $f(x_0,y_0)=n$ be a representation of $n$ by $f$. For any unit $u^k$ of $\mathcal{O}_d$, there exists a unique couple $(x'_0,y'_0)$ such that $u^k \alpha_{x_0,y_0}=\alpha_{x'_0,y'_0}$. Then,
\begin{equation*}
f(x'_0,y'_0)=\frac{N(\alpha_{x'_0,y'_0})}{a}=\frac{N(\alpha_{x_0,y_0})}{a}=n
\end{equation*}
is another representation of $n$ by $f$. We call such a representation a representation equivalent to $f(x_0,y_0)=n$. We can group together to these $\vert \mathcal{U} \vert$ equivalent representations and we associate to this set of $\vert \mathcal{U} \vert$ representations the ideal $\alpha_{x_0,y_0}[1,\tau_f]$.

\begin{remark}
The $\vert \mathcal{U} \vert$ different choices for $\alpha_{x_0,y_0}$ differ by units. However, the associated ideal is unique.
\end{remark}

\begin{lemma} \label{transl}
Let $\mathcal{I}$ be the ideal associated to a set of $\vert \mathcal{U} \vert$ representations equivalent to $f(x_0,y_0)=n$. Let $\mathfrak{b}_f=a[1,\overline{\tau_f}]$ be the ideal of the least norm in $\mathcal{C}_{f^{-1}}$. Then,
\begin{equation*}
\left\lbrace \substack{ \text{{\small arguments of ideal}} \\ \text{{\small numbers associated to $\mathcal{I}$}}} \right\rbrace
= 
\left\lbrace \substack{ \text{{\small angles of the $\vert \mathcal{U} \vert$ representations}} \\ \text{{\small equivalent to $f(x_0,y_0)=n$}}} \right\rbrace - \textrm{arg}(\omega_{\mathfrak{b}_f}) \bmod{2\pi}
\end{equation*}
where $\omega_{\mathfrak{b}_f}$ is any ideal number associated to $\mathfrak{b}_f$.
\end{lemma}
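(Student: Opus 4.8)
The plan is to compute the product ideal $\mathcal{I}\cdot\mathfrak{b}_f$ explicitly, recognise it as the principal ideal $(\alpha_{x_0,y_0})$, and then transport the statement to ideal numbers using the surjective homomorphism $\psi$ from ideal numbers to ideals recalled in Section 2.1, whose kernel is $\mathcal{U}$.

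First I would unwind the notation. Let $\mathfrak{a}=\left[a,\frac{-b+i\sqrt{d}}{2}\right]$ be the integral ideal attached to $f$, so $\mathcal{C}_f$ is its class, $N(\mathfrak{a})=a$, and $[1,\tau_f]=\frac{1}{a}\mathfrak{a}$. Since $a\overline{\tau_f}=\frac{-b-i\sqrt{d}}{2}=\overline{\left(\frac{-b+i\sqrt{d}}{2}\right)}$, we have $\mathfrak{b}_f=a[1,\overline{\tau_f}]=\left[a,\frac{-b-i\sqrt{d}}{2}\right]=\overline{\mathfrak{a}}$. Invoking the classical identity $\mathfrak{a}\,\overline{\mathfrak{a}}=\bigl(N(\mathfrak{a})\bigr)=(a)$, valid because $-d$ is a fundamental discriminant, I obtain
\[
\mathcal{I}\cdot\mathfrak{b}_f=\alpha_{x_0,y_0}[1,\tau_f]\cdot\overline{\mathfrak{a}}=\frac{\alpha_{x_0,y_0}}{a}\,\mathfrak{a}\,\overline{\mathfrak{a}}=\frac{\alpha_{x_0,y_0}}{a}\,(a)=(\alpha_{x_0,y_0}),
\]
an equality of integral ideals: $\alpha_{x_0,y_0}=ax_0+y_0\frac{b+i\sqrt{d}}{2}$ is an algebraic integer, since $b^2+d=4ac$ makes $\frac{b+i\sqrt{d}}{2}$ a root of $X^2-bX+ac$. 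The key point is not just that $\mathcal{I}\cdot\mathfrak{b}_f$ is principal, but that $\alpha_{x_0,y_0}$ is a \emph{generator} of it.

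Next I would pass to ideal numbers. For any ideal $\mathfrak{m}$, the ideal numbers associated to $\mathfrak{m}$ form a full coset of $\mathcal{U}$, namely $\{u^j\gamma:0\le j\le\vert\mathcal{U}\vert-1\}$ for any $\gamma$ with $\psi(\gamma)=\mathfrak{m}$; in particular those associated to $(\alpha_{x_0,y_0})$ are exactly $\{u^k\alpha_{x_0,y_0}:0\le k\le\vert\mathcal{U}\vert-1\}$, by the definition of ideal numbers of principal ideals. Fix an ideal number $\gamma_0$ associated to $\mathcal{I}$. Then $\gamma_0\,\omega_{\mathfrak{b}_f}$ is an ideal number associated to $\mathcal{I}\cdot\mathfrak{b}_f=(\alpha_{x_0,y_0})$, hence $\gamma_0\,\omega_{\mathfrak{b}_f}=u^{k_0}\alpha_{x_0,y_0}$ for some $k_0$. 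Multiplication by $\omega_{\mathfrak{b}_f}$ is a bijection from the coset $\psi^{-1}(\mathcal{I})$ onto the coset $\psi^{-1}\bigl((\alpha_{x_0,y_0})\bigr)$, so, using $\arg(u)=\frac{2\pi}{\vert\mathcal{U}\vert}$, letting $\gamma$ range over all ideal numbers associated to $\mathcal{I}$ gives
\[
\bigl\{\arg(\gamma):\psi(\gamma)=\mathcal{I}\bigr\}=\bigl\{\arg(\alpha_{x_0,y_0})+k\arg(u):0\le k\le\vert\mathcal{U}\vert-1\bigr\}-\arg(\omega_{\mathfrak{b}_f})\bmod 2\pi.
\]

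Finally I would identify the bracket on the right with the set of angles of the $\vert\mathcal{U}\vert$ representations equivalent to $f(x_0,y_0)=n$: the representation attached to the unit $u^k$ has $\alpha_{x'_0,y'_0}=u^k\alpha_{x_0,y_0}$, and the angle of a representation is by definition the argument of its $\alpha$. This is the asserted equality, and it is independent of the choice of $\omega_{\mathfrak{b}_f}$ since replacing it by $u^m\omega_{\mathfrak{b}_f}$ only permutes the coset $\psi^{-1}\bigl((\alpha_{x_0,y_0})\bigr)$. The only genuine work is the ideal computation at the start — tracking the factor $\frac{1}{a}$ and invoking $\mathfrak{a}\,\overline{\mathfrak{a}}=\bigl(N(\mathfrak{a})\bigr)$ — after which everything follows formally from $\psi$ being a homomorphism with kernel $\mathcal{U}$.
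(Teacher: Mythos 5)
Your proof is correct and follows essentially the same route as the paper: the whole content is the identity $\mathcal{I}\,\mathfrak{b}_f=(\alpha_{x_0,y_0})$, after which the statement is transported through the $\vert\mathcal{U}\vert$-to-one homomorphism from ideal numbers to ideals. The only (cosmetic) difference is that you obtain that identity by recognising $\mathfrak{b}_f=\overline{\mathfrak{a}}$ and invoking $\mathfrak{a}\overline{\mathfrak{a}}=(N(\mathfrak{a}))$, whereas the paper multiplies the modules out explicitly and concludes by comparing norms.
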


\begin{proof}
We have $\mathcal{I}=\alpha_{x_0,y_0}[1,\tau_f]$. Then,
\begin{align*}
\alpha_{x_0,y_0}[1,\tau_f] \mathfrak{b}_f &= \alpha_{x_0,y_0}[1,\tau_f] a[1,\overline{\tau_f}] \\
&= [a \alpha_{x_0,y_0},b \alpha_{x_0,y_0},c \alpha_{x_0,y_0},a \alpha_{x_0,y_0} \tau_f] \\
&= [\alpha_{x_0,y_0}]
\end{align*}
since the norm of the ideal $[\alpha_{x_0,y_0}]$ is $an$, the same as $\alpha_{x_0,y_0}[1,\tau_f] \mathfrak{b}_f$. Therefore, the set of ideal numbers of $\mathcal{I} \mathfrak{b}_f$ is the set of elements of the form $u^k \alpha_{x_0,y_0}$. This proves the result.
\end{proof}

\noindent Geometrically, representations of $n$ by $f$ are points with integral coordinates on the ellipse $f(x,y)=n$. From Lemma \ref{transl}, we immediately deduce that the angular distribution of such points is the same as the angular distribution of ideal numbers of norm $n$ in the class $\mathcal{C}_f$.
\\
\\
Suppose that $h>1$ (since, if there is just one class, there is nothing more to prove than Theorem \ref{discr}).
\begin{definition}
For $f$ a reduced primitive binary quadratic form of discriminant $-d$ and $n \in \mathbb{G}$, $n$ represented by $f$, let $\Phi_n^f$ be the set of arguments of ideal numbers of norm $n$ in $\mathcal{C}_f$ and $r_f(n)$ the number of ideal numbers of norm $n$ in $\mathcal{C}_f$. We define the discrepancy of $\Phi_n^f$ as
\begin{equation*}
\Delta_f(n) = \max \left\lbrace \vert \textrm{card*} \lbrace \phi \in \Phi_n^f, \phi \in [\theta_1,\theta_2] \bmod{2\pi} \rbrace - (\theta_2 - \theta_1) r_f(n) \vert, 0 \leq \theta_1 < \theta_2 \leq 2\pi \right\rbrace
\end{equation*}
where the asterisk denotes that if the angle is $\alpha$ or $\beta$, it counts for $\frac{1}{2}$.
\end{definition}
\noindent We would like to use the same technique as in the first section, using the bound
\begin{equation*}
\Delta_f(n) \ll \frac{r_f(n)}{T} + \sum_{t \leq T} \frac{\vert Z_t^f(n) \vert}{t}
\end{equation*}
where $Z_t^f(n) = \sum_{\phi \in \Phi_n^f} \e^{i t \phi}$.
\\
\\
Unfortunately, the function $\frac{\vert Z_t^f(n) \vert}{r_f(n)}$ might not be multiplicative (and will generally not be). However, in the (very restrictive) case where there is one class per genus, what we did before allows us to give a bound on the discrepancy $\Delta_f(n)$, since it will be equal to $\Delta(n)$.

\subsection{One class per genus}

We recall that two forms of discriminant $-d$ are in the same genus if they represent the same values in $\mathbb{Z}/d\mathbb{Z}$. Therefore, two forms in the same class will always be in the same genus, and in fact each genus consists of the same number of classes of forms.
\\
\\
It is known that there is only a finite number of fundamental discriminants $-d$ with one class per genus. These numbers are
\begin{align*}
3,4,7,8,11,15,19,20,24,35,40,43, 51,52,67,84,88,91,115,120,&123,132,148,163,168,187,195, \\
228,232,235,267,280,312,340,372,403,408,420,427,435,483,520,&532,555,595,627,660,708,715,\\
760,795,840,1012,1092,1155,1320,1380,1428,1435,1540,&1848,1995,3003,3315,5460 \, .
\end{align*}
It is classical that, if there is only one class per genus, any integer coprime with $2d$ will be represented by a unique class of quadratic forms, or, equivalently, there is a unique class of ideals containing an ideal of norm $n$, and therefore a unique class of ideal numbers containing an integral ideal number of norm $n$.

\begin{theorem}
Suppose that $d$ is such that there is one class per genus. Let $\mathbb{G}_f$ be the set of $n \in \mathbb{G}$ with $r_f(n)>0$. Let $\varepsilon > 0$. Then, for all the integers in $\mathbb{G}_f$ less than $x$, with at most $o\left( \vert \mathbb{G}_f \cap [0,x] \vert \right)$ exceptions, we have
\begin{equation*}
\Delta_f(n) \leq \frac{r_f(n)}{(\log x)^{\frac{1}{2} \log \left( \frac{\pi}{2} \right) - \varepsilon}} \, .
\end{equation*} 
\end{theorem}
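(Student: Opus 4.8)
The plan is to deduce the theorem directly from Theorem \ref{discr}: I will show that, under the one-class-per-genus hypothesis, $\Delta_f(n)=\Delta(n)$ and $r_f(n)=r(n)$ for every $n\in\mathbb{G}_f$, so that the only genuinely new ingredient needed is a lower bound for $|\mathbb{G}_f\cap[0,x]|$.

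First I would record the elementary equivalence: ``one class per genus'' means the class group is an elementary abelian $2$-group, so every ideal class has order $1$ or $2$. Now take $n\in\mathbb{G}$ written as in Remark \ref{rmk1}. By that remark, the ideals of norm $n$ are exactly the $\prod_p\mathcal{P}^{\delta_p}\overline{\mathcal{P}}^{\alpha_p-\delta_p}\prod_q\mathcal{Q}^{\beta_q}\prod_r[r]^{\gamma_r'}$ with $0\le\delta_p\le\alpha_p$. Since $[r]$ is principal and $[\overline{\mathcal{P}}]=[\mathcal{P}]^{-1}$, the class of such an ideal is $\prod_p[\mathcal{P}]^{2\delta_p-\alpha_p}\prod_q[\mathcal{Q}]^{\beta_q}$; in a group of exponent $2$ the factor $[\mathcal{P}]^{2\delta_p}$ is trivial and $[\mathcal{P}]^{-\alpha_p}=[\mathcal{P}]^{\alpha_p}$, so this class is $\mathcal{C}(n):=\prod_p[\mathcal{P}]^{\alpha_p}\prod_q[\mathcal{Q}]^{\beta_q}$, which depends on $n$ alone. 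Hence all integral ideal numbers of norm $n$ lie in the single class $\mathcal{C}(n)$. In particular, if $n\in\mathbb{G}_f$ then $\mathcal{C}(n)=\mathcal{C}_f$, so $\Phi_n^f=\Phi_n$, $r_f(n)=r(n)$, and therefore $\Delta_f(n)=\Delta(n)$.

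With this in hand, for $n\in\mathbb{G}_f\cap[0,x]$ the claimed inequality is literally the inequality of Theorem \ref{discr}, and the $n\in\mathbb{G}_f\cap[0,x]$ violating it form a subset of the exceptional set there, so they number $o(|\mathbb{G}\cap[0,x]|)$. It remains to see that this is $o(|\mathbb{G}_f\cap[0,x]|)$, for which it suffices to prove $|\mathbb{G}_f\cap[0,x]|\gg x/\sqrt{\log x}$ --- equivalently, noting that $\mathbb{G}$ is the disjoint union of the fibres $\{n\in\mathbb{G}:\mathcal{C}(n)=\mathcal{C}\}$ over the finitely many classes, that each fibre has positive relative density in $\mathbb{G}$.

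I expect this density estimate to be the main obstacle; everything preceding it is formal. Using $\mathbf{1}[\mathcal{C}(n)=\mathcal{C}_f]=\frac1h\sum_\psi\overline{\psi(\mathcal{C}_f)}\,\psi(\mathcal{C}(n))$ over the characters $\psi$ of the class group, one is reduced to estimating $\sum_{n\le x,\ n\in\mathbb{G}}\psi(\mathcal{C}(n))$. On $\mathbb{G}$ the function $n\mapsto\psi(\mathcal{C}(n))$ is $\{\pm1\}$-valued and multiplicative (its value at $p^k$ being the $k$-th power of its value at $p$), cut out by a genus character, so by the classical Landau-type factorization its Dirichlet series equals, up to an Euler product holomorphic and non-vanishing for $\Re s\ge1$, a square root of a product of two quadratic $L$-functions. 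For $\psi$ trivial this is exactly the Dirichlet series behind Lemma \ref{Bern}, with singularity $(s-1)^{-1/2}$ at $s=1$ and main term $\tfrac{\kappa_d}{h}\,x/\sqrt{\log x}$; for $\psi$ nontrivial the genus twist removes the singularity at $s=1$ (the two $L$-functions then being non-principal and non-zero there), so the Selberg--Delange method (or a contour shift) bounds the contribution by $o(x/\sqrt{\log x})$. Hence $|\mathbb{G}_f\cap[0,x]|\sim\tfrac{\kappa_d}{h}\,x/\sqrt{\log x}$, which is more than enough. (This per-class asymptotic is standard and can also be read off from Bernays' work \cite{Bern}.) With positive relative density established, the theorem follows at once.
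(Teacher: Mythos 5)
Your proposal is correct, and its overall architecture coincides with the paper's: reduce to Theorem \ref{discr} by showing that $r_f(n)=r(n)$ for $n\in\mathbb{G}_f$, hence $\Phi_n^f=\Phi_n$ and $\Delta_f(n)=\Delta(n)$, and then convert the exceptional set bound $o(\vert\mathbb{G}\cap[0,x]\vert)$ into $o(\vert\mathbb{G}_f\cap[0,x]\vert)$ via the per-class density of $\mathbb{G}_f$. Where you genuinely diverge is in the proof of the central identity $r_f(n)=r(n)$. The paper invokes the classical fact that, with one class per genus, an integer coprime to $2d$ is represented by a unique class of forms, and must then handle the possibility $2\mid m$ separately, which reduces to an explicit computation in the single relevant case $d=15$. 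You instead use the equivalence between ``one class per genus'' and the class group having exponent $2$, so that $[\overline{\mathcal{P}}]=[\mathcal{P}]^{-1}=[\mathcal{P}]$ and the class of \emph{every} ideal of norm $n$ equals $\prod_p[\mathcal{P}]^{\alpha_p}\prod_q[\mathcal{Q}]^{\beta_q}$, a quantity depending on $n$ alone; this treats all primes, including $2$, uniformly and eliminates the case analysis entirely --- arguably a cleaner argument than the one in the paper. For the density of $\mathbb{G}_f$ you are more explicit than the paper (which simply records Bernays' refined count $\vert\mathbb{G}_f\cap[0,x]\vert\sim\frac{\kappa_d}{h}\,x/\sqrt{\log x}$ in a remark), sketching a genus-character decomposition and a Selberg--Delange argument; either route suffices, since only a lower bound of order $x/\sqrt{\log x}$ is actually needed.
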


\begin{remark}
In \cite{Bern}, Bernays in fact proves a stronger result than the one given in Lemma \ref{Bern}:
\begin{equation*}
\vert \mathbb{G}_f \cap [0,x] \vert = \frac{\kappa_d}{h} \frac{x}{\sqrt{\log x}} + O \left( \frac{x}{\log^{3/4} x} \right) \, .
\end{equation*}
\end{remark}

\begin{proof}
We just need to prove that, if $r_f(n)>0$, we have $r_f(n)=r(n)$. Then, we will always have $\Delta_f(n)=\Delta(n)$ if $n \in \mathbb{G}_f$, which gives us the desired result. Let $n=mk$ with
\begin{equation*}
m=\prod_{\left( \frac{-d}{p}\right)=1} p^{\alpha_p} \quad \text{and} \quad k=\prod_{\left( \frac{-d}{q}\right)=0} q^{\beta_q} \prod_{\left( \frac{-d}{r}\right)=-1} r^{2\gamma_r'} \, .
\end{equation*}
If $\mathcal{C}_g$ denotes the class $\mathcal{C}_f \prod_{\left( \frac{-d}{q}\right)=0} \mathcal{C}_q^{-\beta_q}$, where $\mathcal{C}_q$ is the class of $\mathcal{Q}$, with $[q]=\mathcal{Q}^2$, then $r_f(n)= r_g(m)$.
\\
\\
If $n \in \mathbb{G}_f$, then $r_g(m)>0$. If $(m,2)=1$, since then $(m,2d)=1$ and there is one class per genus, $r_g(m)=r(m)=r(n)$. Hence, $r_f(n)= r(n)$.
\\
\\
If $2 \vert m$, then $\left( \frac{-d}{2}\right)=1$, i.e. $d \equiv \pm 1 \bmod{8}$. The only case with class number $h>1$ is $d=15$. In this case, $h=2$. The two reduced forms of discriminant $-15$ are
\begin{equation*}
f_0(x,y)=x^2 + xy + 4y^2 \quad \text{and} \quad f_1(x,y)=2x^2 + xy + 2y^2
\end{equation*}
and, since $h=2$, $\mathcal{C}_{f_i}^{-1}=\mathcal{C}_{f_i}$ for $i \in \lbrace 0,1 \rbrace$. If $m \in \mathbb{G}_{f_i}$ with $m=2^l s$, $s$ odd, an ideal $\mathcal{I}$ of norm $m$ in $\mathcal{C}_{f_i}$ is of the form
\begin{equation*}
\mathcal{I} = [2,\frac{1+i\sqrt{15}}{2}]^{\alpha_2}[2,\frac{1-i\sqrt{15}}{2}]^{l-\alpha_2} \mathcal{J}
\end{equation*}
where $\mathcal{J}$ is an ideal of norm $s$ in the class $\mathcal{C}_{f_i} \mathcal{C}_{f_1}^k$. Reciprocally, for any ideal $\mathcal{J}$ of norm $s$ in the class $\mathcal{C}_{f_i} \mathcal{C}_{f_1}^l$ and for every choice of $0 \leq \alpha_2 \leq l$, such an ideal is an ideal of norm $m$ in $\mathcal{C}_{f_i}$.
\\
\\
Therefore, if $m=2^l s$, $s$ odd, then
\begin{equation*}
r_{f_i}(2^l r) = (l+1) r(s) = r(m) \, .
\end{equation*}
\end{proof}

\bigskip

\bibliographystyle{alpha}
\nocite{*}
\bibliography{idealnumbers}

\begin{thebibliography}{ET48b}

\bibitem[Ber12]{Bern}
Paul Bernays.
\newblock {\em \"{U}ber die Darstellung von positiven, ganzen Zahlen durch die
  primitiven, bin\"{a}ren quadratischen Formen einer nicht-quadratischen
  Diskriminante}.
\newblock Dieterich, 1912.

\bibitem[Cox89]{Cox}
David~A. Cox.
\newblock {\em Primes of the form {$x^2 + ny^2$}}.
\newblock A Wiley-Interscience Publication. John Wiley \& Sons Inc., New York,
  1989.
\newblock Fermat, class field theory and complex multiplication.

\bibitem[EH99]{Hall}
P.~Erd{\H{o}}s and R.~R. Hall.
\newblock On the angular distribution of {G}aussian integers with fixed norm.
\newblock {\em Discrete Math.}, 200(1-3):87--94, 1999.
\newblock Paul Erd{\H{o}}s memorial collection.

\bibitem[ET48a]{ErTur1}
P.~Erd{\"o}s and P.~Tur{\'a}n.
\newblock On a problem in the theory of uniform distribution. {I}.
\newblock {\em Nederl. Akad. Wetensch., Proc.}, 51:1146--1154 Indagationes
  Math. 10, 370--378 (1948), 1948.

\bibitem[ET48b]{ErTur2}
P.~Erd{\"o}s and P.~Tur{\'a}n.
\newblock On a problem in the theory of uniform distribution. {II}.
\newblock {\em Nederl. Akad. Wetensch., Proc.}, 51:1262--1269 Indagationes
  Math. 10, 406--413 (1948), 1948.

\bibitem[Hec18]{Heck1}
E.~Hecke.
\newblock Eine neue {A}rt von {Z}etafunktionen und ihre {B}eziehungen zur
  {V}erteilung der {P}rimzahlen.
\newblock {\em Math. Z.}, 1(4):357--376, 1918.

\bibitem[Hec20]{Heck2}
E.~Hecke.
\newblock Eine neue {A}rt von {Z}etafunktionen und ihre {B}eziehungen zur
  {V}erteilung der {P}rimzahlen.
\newblock {\em Math. Z.}, 6(1-2):11--51, 1920.

\bibitem[K{\'a}t76]{Kat}
I.~K{\'a}tai.
\newblock The distribution of divisors {${\rm mod}\ 1$}.
\newblock {\em Acta Math. Acad. Sci. Hungar.}, 27(1--2):149--152, 1976.

\bibitem[Kub52]{Kubi}
I.~P. Kubilyus.
\newblock On some problems of the geometry of prime numbers.
\newblock {\em Mat. Sbornik N.S.}, 31(73):507--542, 1952.

\end{thebibliography}

\textsc{\footnotesize D\'{e}partement de math\'{e}matiques et statistiques,
Universit\'{e} de Montr\'{e}al,
CP 6128 succ. Centre-Ville,
Montr\'{e}al QC H3C 3J7, Canada
}

\textit{\small Email address: }\texttt{\small dimitrid@dms.umontreal.ca}

\end{document}